\theoremstyle{plain}
\newtheorem{theorem}{Theorem}[section]
\newtheorem{lemma}{Lemma}[section]
\theoremstyle{definition}
\newtheorem{definition}{Definition}
\newtheorem{example}{\textit{Example}}
\numberwithin{equation}{section}
\begin{document}

\title[Stancu type operators including generalized Brenke polynomials]%
{Stancu type operators including generalized Brenke polynomials}
\author[Sezgin Sucu]%
{Sezgin Sucu}

\newcommand{\acr}{\newline\indent}

\address{Ankara University\acr
                   Faculty of Science\acr
                   Department of Mathematics\acr
                   TR-06100 Ankara\acr
                   Turkey}
\email{ssucu@ankara.edu.tr}

\subjclass[2010]{41A25, 33C45}
\keywords{Szasz operator, Generalized Brenke polynomials, Miller-Lee
polynomials, Gould-Hopper polynomials}

\begin{abstract}
Our objective in this paper is to present the sequence of Stancu type
operators including generalized Brenke polynomials. We answer the problem of
uniform approximation of continuous functions on closed bounded interval and
the problem of the order of this convergence estimate by known tools in
approximation theory. Furthermore, we apply some of the results obtained in
this work to Miller-Lee polynomials and Gould-Hopper polynomials.
\end{abstract}

\maketitle

\section{Introduction}

Approximation of functions by polynomials is not only an important topic of
the theory of mathematical analysis but also provides powerful mathematical
tools to application areas. Theorem of Weierstrass declares that every
function $f$ which is continuous in a finite closed interval $\left[ a,b%
\right] $ can be developed according to polynomials in a series which is
uniformly convergent on $\left[ a,b\right] $. According to Weierstrass'
theorem, the family of all polynomials is dense in $C\left[ a,b\right] $.
Taking advantage of a probabilistic construction, Bernstein presented a
definitively proof of the Weierstrass approximation theory.

For $\phi \in C\left[ 0,\infty \right) $ one of the most important sequence
of operators defined by Szasz \cite{1}%
\begin{equation}
S_{n}\left( \phi ;x\right) :=e^{-nx}\sum\limits_{k=0}^{\infty }\frac{\left(
nx\right) ^{k}}{k!}\phi \left( \frac{k}{n}\right) \text{,}  \label{1}
\end{equation}%
where the above sum converges under suitable conditions.

Assuming that%
\begin{equation*}
h\left( z\right) =\sum\limits_{k=0}^{\infty }a_{k}z^{k}\text{, \ \ \ }\left(
a_{0}\neq 0\right)
\end{equation*}%
is an analytic function on the following set
\begin{equation*}
\left\{ z:\left\vert z\right\vert <R,\text{ }R>1\right\}
\end{equation*}
and $h\left( 1\right) \neq 0$. If polynomials $\pi _{k}$ satisfy the
following relation
\begin{equation}
h\left( u\right) e^{ux}=\sum\limits_{k=0}^{\infty }\pi _{k}\left( x\right)
u^{k}\text{,}  \label{2}
\end{equation}%
then it is called Appell polynomials \cite{2}. These type of polynomials
mentioned above are among the most important special functions and have very
various applications to engineering and mathematical analysis.

Jakimovski and Leviatan \cite{3} gave sequence of operators $\left\{
J_{n}\right\} _{n\geq 1}$ as follows
\begin{equation}
J_{n}\left( \phi ;x\right) :=\frac{e^{-nx}}{h\left( 1\right) }%
\sum\limits_{k=0}^{\infty }\pi _{k}\left( nx\right) \phi \left( \frac{k}{n}%
\right) \text{,}  \label{3}
\end{equation}%
where $\pi _{k}\left( x\right) \geq 0$ for $x\in \left[ 0,\infty \right) $.
The proof of convergence follows in the same way as the proof of Szasz. This
type of method plays an important role in the problem of approximation
theory.

Much of the developments of approximation of functions by the sequence of
operators involving special functions was performed by many mathematicians (%
\cite{4}-\cite{14}).

The construction of our operators is based on the following relation%
\begin{equation}
A_{1}\left( h\left( t\right) \right) A_{2}\left( xh\left( t\right) \right)
=\sum\limits_{k=0}^{\infty }\pi _{k}\left( x\right) t^{k}\text{,}  \label{4}
\end{equation}%
where $A_{1}$, $A_{2}$ and $h$ are analytic functions on the following set
\begin{equation*}
\left\{ t:\left\vert t\right\vert <R,\text{ }R>1\right\}
\end{equation*}%
such that%
\begin{eqnarray}
A_{1}\left( t\right) &=&\sum\limits_{k=0}^{\infty }a_{1,k}t^{k}\ \left(
a_{1,0}\neq 0\right) \text{,}  \notag \\
A_{2}\left( t\right) &=&\sum\limits_{k=0}^{\infty }a_{2,k}t^{k}\ \left(
a_{2,k}\neq 0\right) \text{,}\ \ h\left( t\right) =\sum\limits_{k=1}^{\infty
}h_{k}t^{k}\ \left( h_{1}\neq 0\right) \text{.}  \label{5}
\end{eqnarray}%
If polynomials $\pi _{k}$ satisfy the relation $\left( \ref{4}\right) $,
then it is called generalized Brenke polynomials \cite{15}.

We are mainly concerned with the study of Varma et al. \cite{16}. Therefore,
for $\nu _{1},$ $\nu _{2}\geq 0$ we define the sequence of operators $%
\left\{ \mathcal{L}_{n}^{\left( \nu _{1},\nu _{2}\right) }\right\} _{n\geq 1}
$as follows%
\begin{equation}
\mathcal{L}_{n}^{\left( \nu _{1},\nu _{2}\right) }\left( f;x\right) =\frac{1%
}{A_{1}\left( h\left( 1\right) \right) A_{2}\left( nxh\left( 1\right)
\right) }\sum\limits_{k=0}^{\infty }\pi _{k}\left( nx\right) f\left( \frac{%
k+\nu _{1}}{n+\nu _{2}}\right) \text{,}  \label{6}
\end{equation}%
where $\pi _{k}$ polynomials are defined by $\left( \ref{4}\right) $, the
function $h$ and the polynomials $\pi _{k}$ have the following properties $%
h^{^{\prime }}\left( 1\right) =1$ and $\pi _{k}\left( x\right) \geq 0$. In
the all section of our study, we assume that
\begin{equation}
\begin{array}{ll}
\left( i\right) & A_{1},\text{ }A_{2}:%
\mathbb{R}
\longrightarrow \left( 0,\infty \right) \text{,} \\
\left( ii\right) & \left( \ref{4}\right) \text{ and }\left( \ref{5}\right)
\text{ converge for }\left\vert t\right\vert <R\ \left( R>1\right) \text{.}%
\end{array}
\label{7}
\end{equation}

In this paper we are interested in defining the sequence of Stancu type
operators including generalized Brenke polynomials. In the second section,
we use the theorem of Korovkin on the sequence of positive linear operators
and answer the problem of the order of this convergence estimate by known
tools in approximation theory. Furthermore, we apply some of the results
obtained in this work to Miller-Lee polynomials and Gould-Hopper polynomials.

\section{Approximation to functions using $\mathcal{L}_{n}^{\left( \protect%
\nu _{1},\protect\nu _{2}\right) }$ operators}

Let us denote set of functions of form%
\begin{equation*}
\frac{f\left( x\right) }{1+x^{2}}\text{ is convergent as }x\rightarrow \infty
\end{equation*}%
by $E$.

The following lemmas give rise to important observations.

\begin{lemma}
If $\pi _{k}$ are polynomials satisfying the relation $\left( \ref{4}\right)
$, then
\begin{eqnarray*}
\sum\limits_{k=0}^{\infty }\pi _{k}\left( nx\right) &=&A_{1}\left( h\left(
1\right) \right) A_{2}\left( nxh\left( 1\right) \right) \text{,} \\
\sum\limits_{k=0}^{\infty }k\pi _{k}\left( nx\right) &=&A_{1}^{^{\prime
}}\left( h\left( 1\right) \right) A_{2}\left( nxh\left( 1\right) \right)
+nxA_{1}\left( h\left( 1\right) \right) A_{2}^{^{\prime }}\left( nxh\left(
1\right) \right) \text{,} \\
\sum\limits_{k=0}^{\infty }k^{2}\pi _{k}\left( nx\right) &=&\left[ \left(
h^{^{\prime \prime }}\left( 1\right) +1\right) A_{1}^{^{\prime }}\left(
h\left( 1\right) \right) +A_{1}^{^{\prime \prime }}\left( h\left( 1\right)
\right) \right] A_{2}\left( nxh\left( 1\right) \right) \\
&&+\left[ 2A_{1}^{^{\prime }}\left( h\left( 1\right) \right) +\left(
h^{^{\prime \prime }}\left( 1\right) +1\right) A_{1}\left( h\left( 1\right)
\right) \right] A_{2}^{^{\prime }}\left( nxh\left( 1\right) \right) nx \\
&&+A_{1}\left( h\left( 1\right) \right) A_{2}^{^{\prime \prime }}\left(
nxh\left( 1\right) \right) \left( nx\right) ^{2}\text{.}
\end{eqnarray*}
\end{lemma}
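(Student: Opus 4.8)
The plan is to treat the three sums as successive $t$-derivatives of the generating relation $\left(\ref{4}\right)$ evaluated at $t=1$. Since the radius of convergence satisfies $R>1$, the point $t=1$ lies strictly inside the disc of convergence, so the power series in $t$ may be differentiated term by term and evaluated at $t=1$ without any convergence concern. For the first identity I would simply substitute $t=1$ into $\left(\ref{4}\right)$: the left-hand side becomes $A_1\left(h\left(1\right)\right)A_2\left(xh\left(1\right)\right)$ and the right-hand side becomes $\sum_{k=0}^{\infty}\pi_k\left(x\right)$; replacing $x$ by $nx$ gives the stated formula.

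For the second identity I would differentiate both sides of $\left(\ref{4}\right)$ with respect to $t$. The right-hand side becomes $\sum_{k=0}^{\infty}k\pi_k\left(x\right)t^{k-1}$, while the left-hand side, by the product and chain rules, equals
\[
h'\left(t\right)\left[A_1'\left(h\left(t\right)\right)A_2\left(xh\left(t\right)\right)+xA_1\left(h\left(t\right)\right)A_2'\left(xh\left(t\right)\right)\right].
\]
Setting $t=1$ and invoking the hypothesis $h'\left(1\right)=1$ collapses the prefactor to $1$ and yields the claimed expression after replacing $x$ by $nx$.

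The third identity is the delicate one. I would first compute $\sum_{k=0}^{\infty}k\left(k-1\right)\pi_k\left(x\right)$ by differentiating $\left(\ref{4}\right)$ twice in $t$ and setting $t=1$, then recover the desired sum through the elementary decomposition $k^2=k\left(k-1\right)+k$, adding back the second identity. The second $t$-derivative of the left-hand side splits into a group carrying the factor $h''\left(t\right)$ (from differentiating $h'\left(t\right)$) and a group carrying $\left(h'\left(t\right)\right)^2$ (from differentiating the bracket). Evaluating at $t=1$ with $h'\left(1\right)=1$ gives
\[
h''\left(1\right)\left[A_1'A_2+xA_1A_2'\right]+\left[A_1''A_2+2xA_1'A_2'+x^2A_1A_2''\right],
\]
where every $A_1$-factor is evaluated at $h\left(1\right)$ and every $A_2$-factor at $xh\left(1\right)$. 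Adding the second identity and regrouping by $A_2$, $A_2'$, $A_2''$ produces exactly the coefficients $\left(h''\left(1\right)+1\right)A_1'+A_1''$, then $2A_1'+\left(h''\left(1\right)+1\right)A_1$, and finally $A_1$, matching the statement after the substitution $x\mapsto nx$.

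I expect the main obstacle to be the bookkeeping in the second differentiation: correctly applying the chain rule to the nested composition $A_2\left(xh\left(t\right)\right)$, tracking the cross term $2xA_1'A_2'$, and isolating the $h''\left(1\right)$ contributions so that, after the $k^2=k\left(k-1\right)+k$ reduction, everything regroups cleanly into the three stated coefficient brackets.
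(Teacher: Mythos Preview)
Your argument is correct and is exactly the natural route: differentiate the generating relation \eqref{4} in $t$, use $h'(1)=1$, and recover $\sum k^2\pi_k$ from $\sum k(k-1)\pi_k+\sum k\pi_k$. The paper states Lemma~2.1 without proof, so there is nothing to compare against; your write-up supplies the omitted verification in the expected way.
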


As a direct consequence of Lemma 2.1, we have the following:

\begin{lemma}
For $n\geq 1$, one has the following identities%
\begin{eqnarray*}
\mathcal{L}_{n}^{\left( \nu _{1},\nu _{2}\right) }\left( 1;x\right) &=&1%
\text{,} \\
\mathcal{L}_{n}^{\left( \nu _{1},\nu _{2}\right) }\left( s;x\right) &=&\frac{%
A_{2}^{^{\prime }}\left( nxh\left( 1\right) \right) n}{A_{2}\left( nxh\left(
1\right) \right) \left( n+\nu _{2}\right) }x+\left( \frac{A_{1}^{^{\prime
}}\left( h\left( 1\right) \right) }{A_{1}\left( h\left( 1\right) \right) }%
+\nu _{1}\right) \frac{1}{n+\nu _{2}}\text{,} \\
\mathcal{L}_{n}^{\left( \nu _{1},\nu _{2}\right) }\left( s^{2};x\right) &=&%
\frac{A_{2}^{^{\prime \prime }}\left( nxh\left( 1\right) \right) n^{2}}{%
A_{2}\left( nxh\left( 1\right) \right) \left( n+\nu _{2}\right) ^{2}}x^{2} \\
&&+\frac{\left[ \left( 1+2\nu _{1}+h^{^{\prime \prime }}\left( 1\right)
\right) A_{1}\left( h\left( 1\right) \right) +2A_{1}^{^{\prime }}\left(
h\left( 1\right) \right) \right] A_{2}^{^{\prime }}\left( nxh\left( 1\right)
\right) n}{A_{1}\left( h\left( 1\right) \right) A_{2}\left( nxh\left(
1\right) \right) \left( n+\nu _{2}\right) ^{2}}x \\
&&+\frac{\nu _{1}^{2}A_{1}\left( h\left( 1\right) \right) +\left( 1+2\nu
_{1}+h^{^{\prime \prime }}\left( 1\right) \right) A_{1}^{^{\prime }}\left(
h\left( 1\right) \right) +A_{1}^{^{\prime \prime }}\left( h\left( 1\right)
\right) }{A_{1}\left( h\left( 1\right) \right) \left( n+\nu _{2}\right) ^{2}}%
\text{.}
\end{eqnarray*}

\begin{proof}
The validity of these identities follows from the definition of operators $%
\mathcal{L}_{n}^{\left( \nu _{1},\nu _{2}\right) }$ and Lemma 2.1. Together, $%
\left( \ref{4}\right) $ and $\left( \ref{6}\right) $ imply
\begin{eqnarray*}
\mathcal{L}_{n}^{\left( \nu _{1},\nu _{2}\right) }\left( 1;x\right) &=&\frac{%
1}{A_{1}\left( h\left( 1\right) \right) A_{2}\left( nxh\left( 1\right)
\right) }\sum\limits_{k=0}^{\infty }\pi _{k}\left( nx\right) \\
&=&1\text{.}
\end{eqnarray*}%
Comparing $\left( \ref{6}\right) $ and Lemma 2.1, we see
\begin{eqnarray*}
\mathcal{L}_{n}^{\left( \nu _{1},\nu _{2}\right) }\left( s;x\right) &=&\frac{%
1}{A_{1}\left( h\left( 1\right) \right) A_{2}\left( nxh\left( 1\right)
\right) }\sum\limits_{k=0}^{\infty }\pi _{k}\left( nx\right) \frac{k+\nu _{1}%
}{n+\nu _{2}} \\
&=&\frac{1}{A_{1}\left( h\left( 1\right) \right) A_{2}\left( nxh\left(
1\right) \right) \left( n+\nu _{2}\right) }\sum\limits_{k=0}^{\infty }\pi
_{k}\left( nx\right) k \\
&&+\frac{\nu _{1}}{A_{1}\left( h\left( 1\right) \right) A_{2}\left(
nxh\left( 1\right) \right) \left( n+\nu _{2}\right) }\sum\limits_{k=0}^{%
\infty }\pi _{k}\left( nx\right) \\
&=&\frac{A_{2}^{^{\prime }}\left( nxh\left( 1\right) \right) n}{A_{2}\left(
nxh\left( 1\right) \right) \left( n+\nu _{2}\right) }x+\left( \frac{%
A_{1}^{^{\prime }}\left( h\left( 1\right) \right) }{A_{1}\left( h\left(
1\right) \right) }+\nu _{1}\right) \frac{1}{n+\nu _{2}}\text{.}
\end{eqnarray*}%
The following remaining relation
\begin{eqnarray*}
\mathcal{L}_{n}^{\left( \nu _{1},\nu _{2}\right) }\left( s^{2};x\right) &=&%
\frac{1}{A_{1}\left( h\left( 1\right) \right) A_{2}\left( nxh\left( 1\right)
\right) }\sum\limits_{k=0}^{\infty }\pi _{k}\left( nx\right) \left( \frac{%
k+\nu _{1}}{n+\nu _{2}}\right) ^{2} \\
&=&\frac{1}{A_{1}\left( h\left( 1\right) \right) A_{2}\left( nxh\left(
1\right) \right) \left( n+\nu _{2}\right) ^{2}} \\
&&\times \sum\limits_{k=0}^{\infty }\pi _{k}\left( nx\right) \left(
k^{2}+2k\nu _{1}+\nu _{1}^{2}\right) \\
&=&\frac{A_{2}^{^{\prime \prime }}\left( nxh\left( 1\right) \right) n^{2}}{%
A_{2}\left( nxh\left( 1\right) \right) \left( n+\nu _{2}\right) ^{2}}x^{2} \\
&&+\frac{\left[ \left( 1+2\nu _{1}+h^{^{\prime \prime }}\left( 1\right)
\right) A_{1}\left( h\left( 1\right) \right) +2A_{1}^{^{\prime }}\left(
h\left( 1\right) \right) \right] A_{2}^{^{\prime }}\left( nxh\left( 1\right)
\right) n}{A_{1}\left( h\left( 1\right) \right) A_{2}\left( nxh\left(
1\right) \right) \left( n+\nu _{2}\right) ^{2}}x \\
&&+\frac{\nu _{1}^{2}A_{1}\left( h\left( 1\right) \right) +\left( 1+2\nu
_{1}+h^{^{\prime \prime }}\left( 1\right) \right) A_{1}^{^{\prime }}\left(
h\left( 1\right) \right) +A_{1}^{^{\prime \prime }}\left( h\left( 1\right)
\right) }{A_{1}\left( h\left( 1\right) \right) \left( n+\nu _{2}\right) ^{2}}
\end{eqnarray*}%
can be obtained from $\left( \ref{6}\right) $ and Lemma 2.1.
\end{proof}
\end{lemma}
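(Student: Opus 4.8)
The plan is to evaluate $\mathcal{L}_n^{(\nu_1,\nu_2)}$ directly on the three test functions $1$, $s$, $s^2$ by expanding the image of the node $\frac{k+\nu_1}{n+\nu_2}$ as a polynomial in $k$, and then feeding the resulting power sums $\sum_k \pi_k(nx)$, $\sum_k k\,\pi_k(nx)$, $\sum_k k^2\,\pi_k(nx)$ into Lemma 2.1, which already supplies closed forms for exactly these three sums. The only remaining work is algebraic: substitute those closed forms, cancel the normalizing constant $A_1(h(1))A_2(nxh(1))$ that appears in the definition (\ref{6}), and collect terms. No further analytic input is needed, since the normalization $h'(1)=1$ has already been absorbed into Lemma 2.1.

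For the constant function $f=1$ the weighted sum reduces to $\sum_k \pi_k(nx)$, which by the first identity of Lemma 2.1 equals the normalizing factor $A_1(h(1))A_2(nxh(1))$ itself; the two cancel and yield $\mathcal{L}_n^{(\nu_1,\nu_2)}(1;x)=1$. For $f=s$ I would split the node linearly as $\frac{k+\nu_1}{n+\nu_2}=\frac{1}{n+\nu_2}k+\frac{\nu_1}{n+\nu_2}$, so that
\[
\mathcal{L}_n^{(\nu_1,\nu_2)}(s;x)=\frac{1}{(n+\nu_2)A_1(h(1))A_2(nxh(1))}\left[\sum_{k=0}^{\infty} k\,\pi_k(nx)+\nu_1\sum_{k=0}^{\infty}\pi_k(nx)\right].
\]
Inserting the first two identities of Lemma 2.1 and cancelling $A_2(nxh(1))$ from the $k$-sum and $A_1(h(1))A_2(nxh(1))$ from the $\nu_1$-sum produces the two stated terms: the $x$-term comes from the $A_2'$ piece of $\sum_k k\,\pi_k(nx)$, while the constant term collects $\frac{A_1'(h(1))}{A_1(h(1))}$ together with $\nu_1$.

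For $f=s^2$ I would expand $\left(\frac{k+\nu_1}{n+\nu_2}\right)^2=\frac{k^2+2\nu_1 k+\nu_1^2}{(n+\nu_2)^2}$, splitting the weighted sum into all three power sums, and substitute the three identities of Lemma 2.1. Grouping the result by the factors $A_2''(nxh(1))(nx)^2$, $A_2'(nxh(1))nx$, and $A_2(nxh(1))$ gives the three displayed contributions. The combination $1+2\nu_1+h''(1)$ that appears in both the $x$-term and the constant term arises precisely by merging the $(h''(1)+1)$ factor carried by $\sum_k k^2\,\pi_k(nx)$ with the $2\nu_1$ factor from the cross term $2\nu_1\sum_k k\,\pi_k(nx)$. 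The one genuine obstacle here is bookkeeping: one must keep the $A_1$- and $A_2$-derivative contributions separate and cancel the common normalizing constant correctly, so that the $A_2'$ and $A_2''$ terms retain their $A_2(nxh(1))$ denominators while the purely $A_1$-dependent remainder (the $A_2$-coefficient) is left dividing only by $A_1(h(1))$.
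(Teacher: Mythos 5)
Your proposal is correct and follows essentially the same route as the paper's own proof: you evaluate $\mathcal{L}_{n}^{\left( \nu _{1},\nu _{2}\right) }$ on the test functions $1$, $s$, $s^{2}$ by expanding the node $\frac{k+\nu _{1}}{n+\nu _{2}}$ in powers of $k$ and substituting the three power-sum identities of Lemma 2.1, exactly as the paper does, and your bookkeeping (including the merging of the $2\nu _{1}$ cross term with $h^{\prime \prime }\left( 1\right) +1$ and the correct handling of the $A_{1}$ versus $A_{2}$ denominators) checks out. No gaps.
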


\begin{lemma}
Let $\mathcal{L}_{n}^{\left( \nu _{1},\nu _{2}\right) }$ be operators
defined by $\left( \ref{6}\right) $. Then it follows that%
\begin{eqnarray*}
\mathcal{L}_{n}^{\left( \nu _{1},\nu _{2}\right) }\left( s-x;x\right)
&=&\left( \frac{A_{2}^{^{\prime }}\left( nxh\left( 1\right) \right) n}{%
A_{2}\left( nxh\left( 1\right) \right) \left( n+\nu _{2}\right) }-1\right) x
\\
&&+\left( \frac{A_{1}^{^{\prime }}\left( h\left( 1\right) \right) }{%
A_{1}\left( h\left( 1\right) \right) }+\nu _{1}\right) \frac{1}{n+\nu _{2}}%
\text{,} \\
\mathcal{L}_{n}^{\left( \nu _{1},\nu _{2}\right) }\left( \left( s-x\right)
^{2};x\right) &=&\left[ \frac{A_{2}^{^{\prime \prime }}\left( nxh\left(
1\right) \right) n^{2}}{A_{2}\left( nxh\left( 1\right) \right) \left( n+\nu
_{2}\right) ^{2}}-\frac{2A_{2}^{^{\prime }}\left( nxh\left( 1\right) \right)
n}{A_{2}\left( nxh\left( 1\right) \right) \left( n+\nu _{2}\right) }+1\right]
x^{2} \\
&&+\left\{ \frac{\left( 1+2\nu _{1}+h^{^{\prime \prime }}\left( 1\right)
\right) A_{2}^{^{\prime }}\left( nxh\left( 1\right) \right) n}{A_{2}\left(
nxh\left( 1\right) \right) \left( n+\nu _{2}\right) ^{2}}\right. \\
&&\left. +\frac{2A_{1}^{^{\prime }}\left( h\left( 1\right) \right)
A_{2}^{^{\prime }}\left( nxh\left( 1\right) \right) }{A_{1}\left( h\left(
1\right) \right) A_{2}\left( nxh\left( 1\right) \right) \left( n+\nu
_{2}\right) ^{2}}n\right. \\
&&\left. -\left( \frac{A_{1}^{^{\prime }}\left( h\left( 1\right) \right) }{%
A_{1}\left( h\left( 1\right) \right) }+\nu _{1}\right) \frac{2}{n+\nu _{2}}%
\right\} x \\
&&+\frac{\nu _{1}^{2}}{\left( n+\nu _{2}\right) ^{2}} \\
&&+\frac{\left( 1+2\nu _{1}+h^{^{\prime \prime }}\left( 1\right) \right)
A_{1}^{^{\prime }}\left( h\left( 1\right) \right) +A_{1}^{^{\prime \prime
}}\left( h\left( 1\right) \right) }{A_{1}\left( h\left( 1\right) \right)
\left( n+\nu _{2}\right) ^{2}}\text{.}
\end{eqnarray*}
\end{lemma}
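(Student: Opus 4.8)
The plan is to reduce both identities to the three moment formulas already recorded in Lemma 2.2 by invoking the linearity of $\mathcal{L}_{n}^{\left( \nu _{1},\nu _{2}\right) }$. Since this operator is defined in $\left( \ref{6}\right) $ as a sum of the sampled values of its argument weighted by the fixed coefficients $\pi _{k}\left( nx\right) $, it is manifestly linear in its first slot. The crucial observation is that, with $x$ regarded as a fixed parameter, the functions $s\mapsto s-x$ and $s\mapsto \left( s-x\right) ^{2}$ are polynomials in the sampling variable $s$, so applying the operator decomposes them into linear combinations of the monomials $1$, $s$ and $s^{2}$.

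For the first central moment I would simply write
\begin{equation*}
\mathcal{L}_{n}^{\left( \nu _{1},\nu _{2}\right) }\left( s-x;x\right) =\mathcal{L}_{n}^{\left( \nu _{1},\nu _{2}\right) }\left( s;x\right) -x\,\mathcal{L}_{n}^{\left( \nu _{1},\nu _{2}\right) }\left( 1;x\right)
\end{equation*}
and substitute $\mathcal{L}_{n}^{\left( \nu _{1},\nu _{2}\right) }\left( 1;x\right) =1$ together with the value of $\mathcal{L}_{n}^{\left( \nu _{1},\nu _{2}\right) }\left( s;x\right) $ from Lemma 2.2; collecting the coefficient of $x$ yields the stated formula at once.

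For the second central moment I would expand $\left( s-x\right) ^{2}=s^{2}-2xs+x^{2}$ and use linearity to obtain
\begin{equation*}
\mathcal{L}_{n}^{\left( \nu _{1},\nu _{2}\right) }\left( \left( s-x\right) ^{2};x\right) =\mathcal{L}_{n}^{\left( \nu _{1},\nu _{2}\right) }\left( s^{2};x\right) -2x\,\mathcal{L}_{n}^{\left( \nu _{1},\nu _{2}\right) }\left( s;x\right) +x^{2}\,\mathcal{L}_{n}^{\left( \nu _{1},\nu _{2}\right) }\left( 1;x\right) \text{.}
\end{equation*}
Using $\mathcal{L}_{n}^{\left( \nu _{1},\nu _{2}\right) }\left( 1;x\right) =1$ and writing $\mathcal{L}_{n}^{\left( \nu _{1},\nu _{2}\right) }\left( s^{2};x\right) =\alpha x^{2}+\beta x+\gamma $ and $\mathcal{L}_{n}^{\left( \nu _{1},\nu _{2}\right) }\left( s;x\right) =px+q$ in the obvious notation taken from Lemma 2.2, the right-hand side becomes $\left( \alpha -2p+1\right) x^{2}+\left( \beta -2q\right) x+\gamma $. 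Substituting the explicit values of $\alpha ,\beta ,\gamma ,p,q$ and regrouping over the common denominator $\left( n+\nu _{2}\right) ^{2}$ reproduces exactly the bracketed $x^{2}$ term, the braced $x$ term, and the two constant fractions displayed in the statement.

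The only real work is the bookkeeping in this last regrouping, where several contributions share the denominators $A_{1}\left( h\left( 1\right) \right) A_{2}\left( nxh\left( 1\right) \right) \left( n+\nu _{2}\right) ^{2}$ and must be matched termwise; no idea beyond linearity and the formulas of Lemma 2.2 enters, so the calculation is entirely mechanical.
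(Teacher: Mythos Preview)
Your proposal is correct and follows exactly the same approach as the paper: the paper's own proof simply invokes the linearity of $\mathcal{L}_{n}^{\left( \nu _{1},\nu _{2}\right) }$ together with Lemma 2.2, which is precisely your plan carried out with more explicit bookkeeping.
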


\begin{proof}
According to the rule of linearity of $\mathcal{L}_{n}^{\left( \nu _{1},\nu
_{2}\right) }$ operators and applying Lemma 2.2, this establishes the result.
\end{proof}

Now we define%
\begin{eqnarray*}
\Delta _{1,n}^{\left( \nu _{1},\nu _{2}\right) }\left( x\right) &:&=\mathcal{%
L}_{n}^{\left( \nu _{1},\nu _{2}\right) }\left( s-x;x\right) \text{,} \\
\Delta _{2,n}^{\left( \nu _{1},\nu _{2}\right) }\left( x\right) &:&=\mathcal{%
L}_{n}^{\left( \nu _{1},\nu _{2}\right) }\left( \left( s-x\right)
^{2};x\right) \text{.}
\end{eqnarray*}

For $i=0,1,2$, let us determine under what conditions the sequence of
operators
\begin{equation*}
\mathcal{L}_{n}^{\left( \nu _{1},\nu _{2}\right) }\left( s^{i};x\right)
\end{equation*}%
will approach $x^{i}$ respectively.

\begin{theorem}
Let $f:\left[ 0,\infty \right) \rightarrow
\mathbb{R}
$ be a continuous function belonging to class $E$. Set
\begin{equation}
\lim_{y\rightarrow \infty }\frac{A_{2}^{^{\prime }}\left( y\right) }{%
A_{2}\left( y\right) }=1\text{ and }\lim_{y\rightarrow \infty }\frac{%
A_{2}^{^{\prime \prime }}\left( y\right) }{A_{2}\left( y\right) }=1\text{.}
\label{8}
\end{equation}%
Then,
\begin{equation*}
\mathcal{L}_{n}^{\left( \nu _{1},\nu _{2}\right) }\left( f;x\right)
\rightarrow f\left( x\right)
\end{equation*}%
uniformly as $n\rightarrow \infty $ on each compact subset of $\left[
0,\infty \right) $.
\end{theorem}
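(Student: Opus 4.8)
The plan is to invoke Korovkin's theorem for positive linear operators. First I would record that each $\mathcal{L}_n^{\left(\nu_1,\nu_2\right)}$ is a positive linear operator: linearity is immediate from $\left(\ref{6}\right)$, while positivity follows because $\pi_k\left(nx\right)\ge 0$ by hypothesis and $A_1\left(h\left(1\right)\right)>0$, $A_2\left(nxh\left(1\right)\right)>0$ by the standing assumption $\left(\ref{7}\right)$. Hence it suffices to show that $\mathcal{L}_n^{\left(\nu_1,\nu_2\right)}\left(s^i;x\right)\to x^i$ for $i=0,1,2$ uniformly on every compact set $K\subset\left[0,\infty\right)$, which we may take to be $K=\left[0,b\right]$. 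The three relevant expressions are furnished verbatim by Lemma 2.2, so no recomputation of moments is needed.

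The case $i=0$ is immediate since $\mathcal{L}_n^{\left(\nu_1,\nu_2\right)}\left(1;x\right)=1$. For $i=1$, I would rewrite the difference from Lemma 2.2 as
\[
\mathcal{L}_n^{\left(\nu_1,\nu_2\right)}\left(s;x\right)-x=\left(\frac{A_2^{\prime}\left(nxh\left(1\right)\right)}{A_2\left(nxh\left(1\right)\right)}\cdot\frac{n}{n+\nu_2}-1\right)x+\left(\frac{A_1^{\prime}\left(h\left(1\right)\right)}{A_1\left(h\left(1\right)\right)}+\nu_1\right)\frac{1}{n+\nu_2}.
\]
The second summand is a constant multiple of $1/\left(n+\nu_2\right)$ and so tends to $0$ uniformly. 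For the first summand I would split off $\frac{n}{n+\nu_2}-1=-\nu_2/\left(n+\nu_2\right)$, whose contribution is $O\!\left(b/\left(n+\nu_2\right)\right)$, leaving the term $\psi\left(nxh\left(1\right)\right)x$, where $\psi\left(y\right):=A_2^{\prime}\left(y\right)/A_2\left(y\right)-1$.

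The main obstacle is precisely the uniform control of $\psi\left(nxh\left(1\right)\right)x$ over $x\in\left[0,b\right]$: although $\left(\ref{8}\right)$ gives $\psi\left(y\right)\to 0$ as $y\to\infty$, the argument $nxh\left(1\right)$ need not be large when $x$ is near $0$. I would resolve this by a region split. Given $\varepsilon>0$, choose $Y$ with $\left\vert\psi\left(y\right)\right\vert<\varepsilon$ for $y>Y$; since $A_2>0$, the ratio $\psi$ is continuous and hence bounded by some $M$ on $\left[0,Y\right]$. If $nxh\left(1\right)>Y$ then $\left\vert\psi\left(nxh\left(1\right)\right)x\right\vert<\varepsilon b$, whereas if $nxh\left(1\right)\le Y$ then $x\le Y/\left(nh\left(1\right)\right)$ and $\left\vert\psi\left(nxh\left(1\right)\right)x\right\vert\le MY/\left(nh\left(1\right)\right)$. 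Thus $\sup_{x\in\left[0,b\right]}\left\vert\psi\left(nxh\left(1\right)\right)x\right\vert\le\max\{\varepsilon b,\,MY/\left(nh\left(1\right)\right)\}$, whose limit superior is at most $\varepsilon b$; as $\varepsilon$ is arbitrary, the supremum tends to $0$, establishing the uniform convergence for $i=1$.

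For $i=2$ I would handle the three summands of $\mathcal{L}_n^{\left(\nu_1,\nu_2\right)}\left(s^2;x\right)$ from Lemma 2.2 one at a time. The constant term is $O\!\left(1/\left(n+\nu_2\right)^2\right)$, and the middle term is $O\!\left(b/\left(n+\nu_2\right)\right)\to 0$ uniformly, since $A_2^{\prime}/A_2$ is bounded on $\left[0,\infty\right)$ (being continuous with a finite limit). For the leading term I would consider $\varphi\left(nxh\left(1\right)\right)x^2$ with $\varphi\left(y\right):=A_2^{\prime\prime}\left(y\right)/A_2\left(y\right)-1\to 0$ by the second limit in $\left(\ref{8}\right)$, together with the factor $\frac{n^2}{\left(n+\nu_2\right)^2}\to 1$, and apply the same region-split estimate, now with $x^2\le b^2$ on the large-argument region and $x^2\le\left(Y/\left(nh\left(1\right)\right)\right)^2$ on the small-argument region. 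This gives $\mathcal{L}_n^{\left(\nu_1,\nu_2\right)}\left(s^2;x\right)\to x^2$ uniformly on $\left[0,b\right]$. With all three Korovkin test functions verified, the theorem follows.
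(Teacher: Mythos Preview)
Your proof is correct and follows the same strategy as the paper's own argument: verify the three Korovkin test functions using Lemma~2.2 and the limits $\left(\ref{8}\right)$, then invoke the Korovkin theorem. In fact your write-up is considerably more careful than the paper's, which simply asserts the uniform convergence of $\mathcal{L}_n^{\left(\nu_1,\nu_2\right)}\left(s^i;x\right)\to x^i$; your region-split estimate for $\psi\left(nxh(1)\right)x$ and $\varphi\left(nxh(1)\right)x^2$ near $x=0$ supplies exactly the uniformity justification that the paper leaves implicit.
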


\begin{proof}
Under the assumptions of the $\left( \ref{8}\right) $ and by using Lemma 2.2,
for $i=0,1,2$ we obtain the following%
\begin{equation*}
\mathcal{L}_{n}^{\left( \nu _{1},\nu _{2}\right) }\left( s^{i};x\right)
\rightarrow x^{i}
\end{equation*}%
which converge uniformly in each compact subset of $\left[ 0,\infty \right) $%
. The universal Korovkin-type property \cite{17} enables us to obtain the
assertion of theorem.
\end{proof}

Now define $\tilde{C}\left[ 0,\infty \right) $ and $C_{B}\left[ 0,\infty
\right) $\ to be the set of all uniformly continuous functions and to be the
set of all bounded and continuous functions on $\left[ 0,\infty \right) $,
respectively.

We are now in a position to obtain the order of approximation for the
functions which belongs to the space $\tilde{C}\left[ 0,\infty \right) \cap
E $.

\begin{theorem}
Let $f$ be a function belonging to the class $f\in \tilde{C}\left[ 0,\infty
\right) \cap E$, then
\begin{equation*}
\left\vert \mathcal{L}_{n}^{\left( \nu _{1},\nu _{2}\right) }\left(
f;x\right) -f\left( x\right) \right\vert \leq 2\omega \left( f;\delta
_{n}\left( x\right) \right) \text{,}
\end{equation*}%
where $\delta _{n}\left( x\right) :=\sqrt{\Delta _{2,n}^{\left( \nu _{1},\nu
_{2}\right) }\left( x\right) }$ and $\omega \left( f;.\right) $ is modulus
of continuity \cite{18} of the function $f$.
\end{theorem}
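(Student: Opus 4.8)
The plan is to exploit the fact, recorded in Lemma 2.2, that $\mathcal{L}_{n}^{\left( \nu _{1},\nu _{2}\right) }$ reproduces constants, so that $\mathcal{L}_{n}^{\left( \nu _{1},\nu _{2}\right) }\left( 1;x\right) =1$. Combined with linearity and positivity of the operator, this lets me write
\begin{equation*}
\left\vert \mathcal{L}_{n}^{\left( \nu _{1},\nu _{2}\right) }\left( f;x\right) -f\left( x\right) \right\vert =\left\vert \mathcal{L}_{n}^{\left( \nu _{1},\nu _{2}\right) }\left( f\left( s\right) -f\left( x\right) ;x\right) \right\vert \leq \mathcal{L}_{n}^{\left( \nu _{1},\nu _{2}\right) }\left( \left\vert f\left( s\right) -f\left( x\right) \right\vert ;x\right) \text{.}
\end{equation*}
This reduces the task to estimating the average of $\left\vert f\left( s\right) -f\left( x\right) \right\vert$ against the weights $\pi _{k}\left( nx\right)$.

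Next I would invoke the standard property of the modulus of continuity, namely that for every $\delta >0$ one has $\left\vert f\left( s\right) -f\left( x\right) \right\vert \leq \omega \left( f;\delta \right) \left( 1+\frac{\left\vert s-x\right\vert }{\delta }\right)$. Applying the positive linear operator to this pointwise bound and using once more that $\mathcal{L}_{n}^{\left( \nu _{1},\nu _{2}\right) }\left( 1;x\right) =1$ yields
\begin{equation*}
\left\vert \mathcal{L}_{n}^{\left( \nu _{1},\nu _{2}\right) }\left( f;x\right) -f\left( x\right) \right\vert \leq \omega \left( f;\delta \right) \left( 1+\frac{1}{\delta }\mathcal{L}_{n}^{\left( \nu _{1},\nu _{2}\right) }\left( \left\vert s-x\right\vert ;x\right) \right) \text{.}
\end{equation*}

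The one genuine technical step is controlling the first absolute moment $\mathcal{L}_{n}^{\left( \nu _{1},\nu _{2}\right) }\left( \left\vert s-x\right\vert ;x\right)$, which I would handle by the Cauchy--Schwarz inequality for positive linear operators: $\mathcal{L}_{n}^{\left( \nu _{1},\nu _{2}\right) }\left( \left\vert s-x\right\vert ;x\right) \leq \left( \mathcal{L}_{n}^{\left( \nu _{1},\nu _{2}\right) }\left( \left( s-x\right) ^{2};x\right) \right) ^{1/2}\left( \mathcal{L}_{n}^{\left( \nu _{1},\nu _{2}\right) }\left( 1;x\right) \right) ^{1/2}=\sqrt{\Delta _{2,n}^{\left( \nu _{1},\nu _{2}\right) }\left( x\right) }$, where the second central moment is exactly the quantity computed in Lemma 2.3. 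Here lies the only place where the structural hypotheses on $A_{1}$, $A_{2}$, $h$ and the positivity $\pi _{k}\geq 0$ are essential, since they guarantee positivity and hence the applicability of Cauchy--Schwarz.

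Finally I would make the optimal choice $\delta =\delta _{n}\left( x\right) :=\sqrt{\Delta _{2,n}^{\left( \nu _{1},\nu _{2}\right) }\left( x\right) }$, which makes the factor $\frac{1}{\delta }\mathcal{L}_{n}^{\left( \nu _{1},\nu _{2}\right) }\left( \left\vert s-x\right\vert ;x\right)$ at most $1$, collapsing the bracket to $1+1=2$ and delivering the desired estimate $\left\vert \mathcal{L}_{n}^{\left( \nu _{1},\nu _{2}\right) }\left( f;x\right) -f\left( x\right) \right\vert \leq 2\omega \left( f;\delta _{n}\left( x\right) \right)$. I do not anticipate a serious obstacle: the argument is the classical modulus-of-continuity technique, and all the moment computations it requires are already supplied by Lemmas 2.2 and 2.3.
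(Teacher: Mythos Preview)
Your proposal is correct and follows essentially the same route as the paper's own proof: both start from $\mathcal{L}_{n}^{(\nu_{1},\nu_{2})}(1;x)=1$ and positivity to reduce to $\mathcal{L}_{n}^{(\nu_{1},\nu_{2})}(|f(s)-f(x)|;x)$, apply the modulus-of-continuity inequality $|f(s)-f(x)|\leq\omega(f;\delta)(1+|s-x|/\delta)$, bound the first absolute moment via Cauchy--Schwarz by $\sqrt{\Delta_{2,n}^{(\nu_{1},\nu_{2})}(x)}$, and then choose $\delta=\delta_{n}(x)$. There is no substantive difference between the two arguments.
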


\begin{proof}
It follows from Lemma 2.2 and monotonicity property of operators $\mathcal{L}%
_{n}^{\left( \nu _{1},\nu _{2}\right) }$ that%
\begin{equation*}
\left\vert \mathcal{L}_{n}^{\left( \nu _{1},\nu _{2}\right) }\left(
f;x\right) -f\left( x\right) \right\vert \leq \mathcal{L}_{n}^{\left( \nu
_{1},\nu _{2}\right) }\left( \left\vert f\left( s\right) -f\left( x\right)
\right\vert ;x\right) \text{.}
\end{equation*}%
With the aid of the property of $\omega \left( f;.\right) $ we can write
\begin{equation*}
\left\vert \mathcal{L}_{n}^{\left( \nu _{1},\nu _{2}\right) }\left(
f;x\right) -f\left( x\right) \right\vert \leq \omega \left( f;\delta \right)
\left( 1+\frac{1}{\delta }\mathcal{L}_{n}^{\left( \nu _{1},\nu _{2}\right)
}\left( \left\vert s-x\right\vert ;x\right) \right) \text{.}
\end{equation*}%
After using Cauchy-Schwarz inequality, the above inequality leads to%
\begin{equation*}
\left\vert \mathcal{L}_{n}^{\left( \nu _{1},\nu _{2}\right) }\left(
f;x\right) -f\left( x\right) \right\vert \leq \omega \left( f;\delta \right)
\left( 1+\frac{1}{\delta }\sqrt{\Delta _{2,n}^{\left( \nu _{1},\nu
_{2}\right) }\left( x\right) }\right) \text{.}
\end{equation*}%
With $\delta :=\delta _{n}\left( x\right) =\sqrt{\Delta _{2,n}^{\left( \nu
_{1},\nu _{2}\right) }\left( x\right) }$, the proof is completed.
\end{proof}

\begin{definition}
For $\alpha $ and $M$ with $0<\alpha \leq 1$, $M>0$, a function $f$ is said
to be Lipschitz property of order $\alpha $ if it satisfies%
\begin{equation}
\left\vert f\left( t_{1}\right) -f\left( t_{2}\right) \right\vert \leq
M\left\vert t_{1}-t_{2}\right\vert ^{\alpha }\text{, \ \ \ \ \ }t_{1}\text{,}%
t_{2}\in \left[ 0,\infty \right) \text{.}  \label{9}
\end{equation}
\end{definition}

\begin{theorem}
Let $f$ be a function satisfying the condition $\left( \ref{9}\right) $.
Then for $x\geq 0$%
\begin{equation*}
\left\vert \mathcal{L}_{n}^{\left( \nu _{1},\nu _{2}\right) }\left(
f;x\right) -f\left( x\right) \right\vert \leq M\delta _{n}^{\alpha }\left(
x\right) \text{,}
\end{equation*}%
where $\delta _{n}\left( x\right) $ is defined as Theorem 2.2.
\end{theorem}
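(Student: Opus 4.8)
The plan is to exploit the positivity and linearity of $\mathcal{L}_n^{\left( \nu_1,\nu_2\right)}$ together with the normalization $\mathcal{L}_n^{\left( \nu_1,\nu_2\right)}\left( 1;x\right) =1$ from Lemma 2.2, reducing the problem to estimating the operator applied to the function $\left\vert s-x\right\vert^{\alpha}$. First I would write
\begin{equation*}
\left\vert \mathcal{L}_n^{\left( \nu_1,\nu_2\right)}\left( f;x\right) -f\left( x\right)\right\vert
=\left\vert \mathcal{L}_n^{\left( \nu_1,\nu_2\right)}\left( f\left( s\right) -f\left( x\right);x\right)\right\vert
\le \mathcal{L}_n^{\left( \nu_1,\nu_2\right)}\left( \left\vert f\left( s\right) -f\left( x\right)\right\vert;x\right),
\end{equation*}
using that $\mathcal{L}_n^{\left( \nu_1,\nu_2\right)}$ reproduces constants and is monotone. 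Invoking the hypothesis $\left( \ref{9}\right)$ then gives the pointwise bound $\left\vert f\left( s\right) -f\left( x\right)\right\vert \le M\left\vert s-x\right\vert^{\alpha}$, so that
\begin{equation*}
\left\vert \mathcal{L}_n^{\left( \nu_1,\nu_2\right)}\left( f;x\right) -f\left( x\right)\right\vert
\le M\,\mathcal{L}_n^{\left( \nu_1,\nu_2\right)}\left( \left\vert s-x\right\vert^{\alpha};x\right).
\end{equation*}

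The crux of the argument is to relate the fractional moment $\mathcal{L}_n^{\left( \nu_1,\nu_2\right)}\left( \left\vert s-x\right\vert^{\alpha};x\right)$ to the second central moment $\Delta_{2,n}^{\left( \nu_1,\nu_2\right)}\left( x\right)$ already computed in Lemma 2.3. For this I would apply the Hölder inequality for positive linear operators with the conjugate exponents $p=\tfrac{2}{\alpha}$ and $q=\tfrac{2}{2-\alpha}$ (which satisfy $\tfrac1p+\tfrac1q=1$ precisely because $0<\alpha\le 1$). Taking the factors $\left\vert s-x\right\vert^{\alpha}$ and $1$, this yields
\begin{equation*}
\mathcal{L}_n^{\left( \nu_1,\nu_2\right)}\left( \left\vert s-x\right\vert^{\alpha};x\right)
\le \left[\mathcal{L}_n^{\left( \nu_1,\nu_2\right)}\left( \left( s-x\right)^{2};x\right)\right]^{\alpha/2}
\left[\mathcal{L}_n^{\left( \nu_1,\nu_2\right)}\left( 1;x\right)\right]^{\left( 2-\alpha\right)/2}.
\end{equation*}
Since the second bracket equals $1$ and the first bracket is exactly $\Delta_{2,n}^{\left( \nu_1,\nu_2\right)}\left( x\right)=\delta_n^{2}\left( x\right)$, the right-hand side collapses to $\delta_n^{\alpha}\left( x\right)$, and combining with the previous display completes the proof.

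I expect the only genuine subtlety to be the correct selection of the Hölder exponents $p=2/\alpha$ and $q=2/(2-\alpha)$, together with checking that positivity and the identity $\mathcal{L}_n^{\left( \nu_1,\nu_2\right)}\left( 1;x\right)=1$ justify applying the operator-form Hölder inequality; the constant case $\alpha=1$ reduces directly to the Cauchy–Schwarz estimate already used in the proof of Theorem 2.2. Everything else is a routine consequence of Lemma 2.2 and Lemma 2.3, so no further computation of moments is needed.
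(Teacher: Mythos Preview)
Your proposal is correct and follows essentially the same route as the paper: reduce to $\mathcal{L}_n^{(\nu_1,\nu_2)}(|s-x|^{\alpha};x)$ via positivity and the Lipschitz condition, then apply H\"older with exponents $p=2/\alpha$, $q=2/(2-\alpha)$ together with $\mathcal{L}_n^{(\nu_1,\nu_2)}(1;x)=1$ to bound this by $[\Delta_{2,n}^{(\nu_1,\nu_2)}(x)]^{\alpha/2}=\delta_n^{\alpha}(x)$. The paper's argument is identical, only slightly less explicit about the choice of conjugate exponents.
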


\begin{proof}
Since the $\mathcal{L}_{n}^{\left( \nu _{1},\nu _{2}\right) }$ are monotone,
then we have%
\begin{eqnarray}
\left\vert \mathcal{L}_{n}^{\left( \nu _{1},\nu _{2}\right) }\left(
f;x\right) -f\left( x\right) \right\vert &=&\left\vert \mathcal{L}%
_{n}^{\left( \nu _{1},\nu _{2}\right) }\left( f\left( s\right) -f\left(
x\right) ;x\right) \right\vert  \notag \\
&\leq &\mathcal{L}_{n}^{\left( \nu _{1},\nu _{2}\right) }\left( \left\vert
f\left( s\right) -f\left( x\right) \right\vert ;x\right)  \notag \\
&\leq &M\mathcal{L}_{n}^{\left( \nu _{1},\nu _{2}\right) }\left( \left\vert
s-x\right\vert ^{\alpha };x\right) \text{.}  \label{10}
\end{eqnarray}%
On the other hand the following inequality follows from the H\"{o}lder
inequality%
\begin{equation*}
\mathcal{L}_{n}^{\left( \nu _{1},\nu _{2}\right) }\left( \left\vert
s-x\right\vert ^{\alpha };x\right) \leq \left[ \mathcal{L}_{n}^{\left( \nu
_{1},\nu _{2}\right) }\left( 1;x\right) \right] ^{\frac{2-\alpha }{2}}\left[
\Delta _{2,n}^{\left( \nu _{1},\nu _{2}\right) }\left( x\right) \right] ^{%
\frac{\alpha }{2}}\text{.}
\end{equation*}%
From this and $\left( \ref{10}\right) $ we obtain
\begin{equation*}
\left\vert \mathcal{L}_{n}^{\left( \nu _{1},\nu _{2}\right) }\left(
f;x\right) -f\left( x\right) \right\vert \leq M\left[ \Delta _{2,n}^{\left(
\nu _{1},\nu _{2}\right) }\left( x\right) \right] ^{\alpha /2}\text{.}
\end{equation*}%
Hence the assertion of theorem holds.
\end{proof}

A norm on a linear space $C_{B}\left[ 0,\infty \right) $ is defined by
\begin{equation*}
\left\Vert f\right\Vert _{C_{B}\left[ 0,\infty \right) }=\underset{x\in %
\left[ 0,\infty \right) }{\sup }\left\vert f\left( x\right) \right\vert
\text{.}
\end{equation*}%
Furthermore, the following set of functions
\begin{equation*}
C_{B}^{2}\left[ 0,\infty \right) =\left\{ \psi \in C_{B}\left[ 0,\infty
\right) :\psi ^{^{\prime }},\text{\ }\psi ^{^{\prime \prime }}\in C_{B}\left[
0,\infty \right) \right\}
\end{equation*}%
is a normed space with
\begin{equation*}
\left\Vert \psi \right\Vert _{C_{B}^{2}\left[ 0,\infty \right) }=\left\Vert
\psi \right\Vert _{C_{B}\left[ 0,\infty \right) }+\left\Vert \psi ^{^{\prime
}}\right\Vert _{C_{B}\left[ 0,\infty \right) }+\left\Vert \psi ^{^{\prime
\prime }}\right\Vert _{C_{B}\left[ 0,\infty \right) }\text{.}
\end{equation*}

An estimate of the convergence rate is obtained in the following theorem by
using Peetre's K -functional \cite{18} which is relevant in approximation
theory.

\begin{theorem}
Suppose $f\in C_{B}\left[ 0,\infty \right) $ and $x\in \left[ 0,\infty
\right) $. Then%
\begin{equation*}
\left\vert \mathcal{L}_{n}^{\left( \nu _{1},\nu _{2}\right) }\left(
f;x\right) -f\left( x\right) \right\vert \leq 2\mathcal{K}\left( f;\lambda
_{n}\left( x\right) \right) \text{,}
\end{equation*}%
where
\begin{equation*}
\lambda _{n}\left( x\right) =\frac{1}{2}\left[ \Delta _{1,n}^{\left( \nu
_{1},\nu _{2}\right) }\left( x\right) +\Delta _{2,n}^{\left( \nu _{1},\nu
_{2}\right) }\left( x\right) \right]
\end{equation*}%
and $\mathcal{K}\left( f;.\right) $ is the Peetre's K -functional of the
function $f$.
\end{theorem}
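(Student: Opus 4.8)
The plan is to follow the standard route for $K$-functional estimates for positive linear operators that reproduce constants, the whole argument resting on the identity $\mathcal{L}_{n}^{(\nu_1,\nu_2)}(1;x)=1$ supplied by Lemma 2.2. Recall that the Peetre $K$-functional is
\[
\mathcal{K}(f;\delta)=\inf_{g\in C_{B}^{2}[0,\infty)}\left\{\|f-g\|_{C_{B}[0,\infty)}+\delta\,\|g\|_{C_{B}^{2}[0,\infty)}\right\}.
\]
The first task is to control $\mathcal{L}_{n}^{(\nu_1,\nu_2)}(g;x)-g(x)$ for a smooth test element $g\in C_{B}^{2}[0,\infty)$. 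To this end I would expand $g$ by Taylor's formula with integral remainder about the point $x$,
\[
g(s)=g(x)+(s-x)g'(x)+\int_{x}^{s}(s-u)g''(u)\,\mathrm{d}u,
\]
apply the operator in the variable $s$, and invoke linearity together with $\mathcal{L}_{n}^{(\nu_1,\nu_2)}(1;x)=1$. The linear term then produces exactly $\Delta_{1,n}^{(\nu_1,\nu_2)}(x)\,g'(x)$, while the remainder is dominated in absolute value by $\tfrac12\|g''\|_{C_{B}[0,\infty)}(s-x)^2$, so that applying the positive operator bounds its contribution by $\tfrac12\|g''\|_{C_{B}[0,\infty)}\,\Delta_{2,n}^{(\nu_1,\nu_2)}(x)$.

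Collecting these contributions and estimating $|g'(x)|\le\|g\|_{C_{B}^{2}[0,\infty)}$ and $\|g''\|_{C_{B}[0,\infty)}\le\|g\|_{C_{B}^{2}[0,\infty)}$ should give
\[
\left|\mathcal{L}_{n}^{(\nu_1,\nu_2)}(g;x)-g(x)\right|\le\left(\Delta_{1,n}^{(\nu_1,\nu_2)}(x)+\Delta_{2,n}^{(\nu_1,\nu_2)}(x)\right)\|g\|_{C_{B}^{2}[0,\infty)}=2\lambda_{n}(x)\,\|g\|_{C_{B}^{2}[0,\infty)},
\]
where the definition $\lambda_{n}(x)=\tfrac12\big(\Delta_{1,n}^{(\nu_1,\nu_2)}(x)+\Delta_{2,n}^{(\nu_1,\nu_2)}(x)\big)$ is used to absorb the factor.

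Next, for an arbitrary $f\in C_{B}[0,\infty)$ and any competitor $g\in C_{B}^{2}[0,\infty)$, I would insert $g$ and split by the triangle inequality,
\[
\left|\mathcal{L}_{n}^{(\nu_1,\nu_2)}(f;x)-f(x)\right|\le\left|\mathcal{L}_{n}^{(\nu_1,\nu_2)}(f-g;x)\right|+\left|\mathcal{L}_{n}^{(\nu_1,\nu_2)}(g;x)-g(x)\right|+|g(x)-f(x)|.
\]
Since $\mathcal{L}_{n}^{(\nu_1,\nu_2)}$ is positive and reproduces constants, it is a contraction in the sup-norm, so the first and third terms are each at most $\|f-g\|_{C_{B}[0,\infty)}$. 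Substituting the smooth estimate yields
\[
\left|\mathcal{L}_{n}^{(\nu_1,\nu_2)}(f;x)-f(x)\right|\le 2\left(\|f-g\|_{C_{B}[0,\infty)}+\lambda_{n}(x)\,\|g\|_{C_{B}^{2}[0,\infty)}\right),
\]
and taking the infimum over all $g\in C_{B}^{2}[0,\infty)$ converts the right-hand side into $2\mathcal{K}(f;\lambda_{n}(x))$, which is the claimed bound.

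I expect the only delicate point to be the bookkeeping in the smooth step: one must confirm that the first-order term assembles exactly into $\Delta_{1,n}^{(\nu_1,\nu_2)}(x)\,g'(x)$, that the integral remainder couples cleanly to $\Delta_{2,n}^{(\nu_1,\nu_2)}(x)$, and that the coefficients reorganize so the estimate reads $2\lambda_{n}(x)\|g\|_{C_{B}^{2}[0,\infty)}$ with the $\tfrac12$ of $\lambda_{n}(x)$ correctly accounted for. Once this smooth inequality is in hand, the passage to general $f$ and the final infimum are purely formal consequences of positivity, the normalization $\mathcal{L}_{n}^{(\nu_1,\nu_2)}(1;x)=1$, and the definition of the $K$-functional, requiring no further computation with the explicit expressions of Lemma 2.3.
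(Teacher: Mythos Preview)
Your proposal is correct and follows essentially the same route as the paper: Taylor-expand a smooth competitor $g\in C_B^2[0,\infty)$ about $x$, apply the operator using $\mathcal{L}_n^{(\nu_1,\nu_2)}(1;x)=1$ to obtain the bound $2\lambda_n(x)\|g\|_{C_B^2}$, then split $f=(f-g)+g$ via the triangle inequality, use the sup-norm contraction property, and take the infimum over $g$. The only cosmetic difference is that you use the integral form of the Taylor remainder, whereas the paper writes the Lagrange form $\tfrac{1}{2}\psi''(\eta)(s-x)^2$; your choice is in fact cleaner, since $\eta$ depends on $s$ and cannot literally be pulled through the operator as the paper's display suggests.
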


\begin{proof}
Expand $\psi \in C_{B}^{2}\left[ 0,\infty \right) $ about $x$ using Taylor's
theorem we get
\begin{equation*}
\psi \left( s\right) =\psi \left( x\right) +\left( s-x\right) \psi
^{^{\prime }}\left( x\right) +\frac{\psi ^{^{\prime \prime }}\left( \eta
\right) }{2}\left( s-x\right) ^{2},\text{ \ \ }\eta \in \left( x,s\right)
\text{.}
\end{equation*}%
This yields%
\begin{equation*}
\mathcal{L}_{n}^{\left( \nu _{1},\nu _{2}\right) }\left( \psi ;x\right)
-\psi \left( x\right) =\psi ^{^{\prime }}\left( x\right) \Delta
_{1,n}^{\left( \nu _{1},\nu _{2}\right) }\left( x\right) +\frac{\psi
^{^{\prime \prime }}\left( \eta \right) }{2}\Delta _{2,n}^{\left( \nu
_{1},\nu _{2}\right) }\left( x\right) \text{.}
\end{equation*}%
From this it follows easily that%
\begin{eqnarray}
\left\vert \mathcal{L}_{n}^{\left( \nu _{1},\nu _{2}\right) }\left( \psi
;x\right) -\psi \left( x\right) \right\vert &\leq &\left\vert \psi
^{^{\prime }}\left( x\right) \right\vert \Delta _{1,n}^{\left( \nu _{1},\nu
_{2}\right) }\left( x\right) +\frac{\left\vert \psi ^{^{\prime \prime
}}\left( \eta \right) \right\vert }{2}\Delta _{2,n}^{\left( \nu _{1},\nu
_{2}\right) }\left( x\right)  \notag \\
&\leq &\left[ \Delta _{1,n}^{\left( \nu _{1},\nu _{2}\right) }\left(
x\right) +\Delta _{2,n}^{\left( \nu _{1},\nu _{2}\right) }\left( x\right) %
\right] \left\Vert \psi \right\Vert _{C_{B}^{2}\left[ 0,\infty \right) }%
\text{.}  \label{11}
\end{eqnarray}%
With Lemma 2.2 and expression $\left( \ref{11}\right) $ the estimation can be
written as%
\begin{eqnarray*}
\left\vert \mathcal{L}_{n}^{\left( \nu _{1},\nu _{2}\right) }\left(
f;x\right) -f\left( x\right) \right\vert &\leq &\left\vert \mathcal{L}%
_{n}^{\left( \nu _{1},\nu _{2}\right) }\left( f-\psi ;x\right) \right\vert
+\left\vert \mathcal{L}_{n}^{\left( \nu _{1},\nu _{2}\right) }\left( \psi
;x\right) -\psi \left( x\right) \right\vert \\
&&+\left\vert f\left( x\right) -\psi \left( x\right) \right\vert \\
&\leq &2\left\Vert f-\psi \right\Vert _{C_{B}\left[ 0,\infty \right)
}+\left\vert \mathcal{L}_{n}^{\left( \nu _{1},\nu _{2}\right) }\left( \psi
;x\right) -\psi \left( x\right) \right\vert \\
&\leq &2\left( \left\Vert f-\psi \right\Vert _{C_{B}\left[ 0,\infty \right)
}+\lambda _{n}\left( x\right) \left\Vert \psi \right\Vert _{C_{B}^{2}\left[
0,\infty \right) }\right) \text{.}
\end{eqnarray*}%
Taking the infimum over all $\psi \in C_{B}^{2}\left[ 0,\infty \right) $,
the last inequality together with the definition of $\mathcal{K}\left(
f;.\right) $ implies the following desired result%
\begin{equation*}
\left\vert \mathcal{L}_{n}^{\left( \nu _{1},\nu _{2}\right) }\left(
f;x\right) -f\left( x\right) \right\vert \leq 2\mathcal{K}\left( f;\lambda
_{n}\left( x\right) \right) \text{.}
\end{equation*}
\end{proof}

\begin{theorem}
Suppose $f\in C_{B}\left[ 0,\infty \right) $. Then%
\begin{equation*}
\left\vert \mathcal{L}_{n}^{\left( \nu _{1},\nu _{2}\right) }\left(
f;x\right) -f\left( x\right) \right\vert \leq C\omega _{2}\left( f;\sqrt{\mu
_{n}\left( x\right) }\right) +\omega \left( f;\Delta _{1,n}^{\left( \nu
_{1},\nu _{2}\right) }\left( x\right) \right) \text{,}
\end{equation*}%
where $C$ is a positive constant and%
\begin{equation*}
\mu _{n}\left( x\right) =\frac{1}{8}\left\{ \Delta _{2,n}^{\left( \nu
_{1},\nu _{2}\right) }\left( x\right) +\left[ \Delta _{1,n}^{\left( \nu
_{1},\nu _{2}\right) }\left( x\right) \right] ^{2}\right\} \text{,}
\end{equation*}%
and $\omega _{2}\left( f;.\right) $ is the second order modulus of
smoothness \cite{18} of function $f$.
\end{theorem}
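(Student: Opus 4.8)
The plan is to estimate $\mathcal{L}_n^{(\nu_1,\nu_2)}(f;x) - f(x)$ by introducing an auxiliary \emph{Steklov-type} or smoothing function and invoking the standard relationship between the second-order modulus of smoothness $\omega_2$ and the Peetre $K$-functional. The key fact I would use is the classical equivalence: there exist absolute constants $c_1, c_2 > 0$ such that for all $t > 0$,
\[
c_1 \, \omega_2(f;\sqrt{t}) \leq \mathcal{K}(f;t) \leq c_2 \, \omega_2(f;\sqrt{t}).
\]
This lets me convert the $K$-functional bound of Theorem~2.4 into a bound in terms of $\omega_2$. However, the present statement carries an extra $\omega(f;\Delta_{1,n}^{(\nu_1,\nu_2)}(x))$ term and uses $\mu_n(x) = \tfrac{1}{8}\{\Delta_{2,n}^{(\nu_1,\nu_2)}(x) + [\Delta_{1,n}^{(\nu_1,\nu_2)}(x)]^2\}$ rather than $\lambda_n$, which signals that the natural route is through an \emph{auxiliary operator} that corrects the first moment, so I would proceed via that construction.

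First I would define the modified operator
\[
\widehat{\mathcal{L}}_n^{(\nu_1,\nu_2)}(f;x) := \mathcal{L}_n^{(\nu_1,\nu_2)}(f;x) + f(x) - f\!\left(x + \Delta_{1,n}^{(\nu_1,\nu_2)}(x)\right),
\]
which is constructed precisely so that it preserves linear functions: a direct check using Lemma~2.2 gives $\widehat{\mathcal{L}}_n^{(\nu_1,\nu_2)}(1;x) = 1$ and $\widehat{\mathcal{L}}_n^{(\nu_1,\nu_2)}(s-x;x) = 0$. Next I would take $\psi \in C_B^2[0,\infty)$, expand by Taylor's theorem to second order as in the proof of Theorem~2.4, and apply $\widehat{\mathcal{L}}_n^{(\nu_1,\nu_2)}$. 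Because the first moment vanishes, the linear term drops out and I am left with only the second-order remainder, yielding a bound of the form
\[
\left|\widehat{\mathcal{L}}_n^{(\nu_1,\nu_2)}(\psi;x) - \psi(x)\right| \leq \mu_n(x)\,\|\psi''\|_{C_B[0,\infty)},
\]
where the $\tfrac{1}{8}$ and the combination $\Delta_{2,n}^{(\nu_1,\nu_2)}(x) + [\Delta_{1,n}^{(\nu_1,\nu_2)}(x)]^2$ emerge from estimating $\widehat{\mathcal{L}}_n^{(\nu_1,\nu_2)}((s-x)^2;x)$, which equals $\Delta_{2,n}^{(\nu_1,\nu_2)}(x) + [\Delta_{1,n}^{(\nu_1,\nu_2)}(x)]^2$ once the shift is accounted for.

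Then I would split $f = (f-\psi) + \psi$ and write
\[
\left|\widehat{\mathcal{L}}_n^{(\nu_1,\nu_2)}(f;x) - f(x)\right| \leq 2\|f-\psi\|_{C_B[0,\infty)} + \mu_n(x)\,\|\psi''\|_{C_B[0,\infty)},
\]
and take the infimum over $\psi \in C_B^2[0,\infty)$ to obtain a bound by $2\mathcal{K}(f;\mu_n(x)/2)$, which by the equivalence above is bounded by $C\,\omega_2(f;\sqrt{\mu_n(x)})$. Finally I would unwind the definition of $\widehat{\mathcal{L}}_n^{(\nu_1,\nu_2)}$: the difference $\mathcal{L}_n^{(\nu_1,\nu_2)}(f;x) - f(x)$ differs from $\widehat{\mathcal{L}}_n^{(\nu_1,\nu_2)}(f;x) - f(x)$ by the term $f(x + \Delta_{1,n}^{(\nu_1,\nu_2)}(x)) - f(x)$, which is controlled directly by the first-order modulus via $|f(x+\Delta_{1,n}) - f(x)| \leq \omega(f;|\Delta_{1,n}^{(\nu_1,\nu_2)}(x)|)$, producing the extra $\omega(f;\Delta_{1,n}^{(\nu_1,\nu_2)}(x))$ summand. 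Combining these two contributions gives the claimed estimate.

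The main obstacle I anticipate is the bookkeeping in the second step: verifying that $\widehat{\mathcal{L}}_n^{(\nu_1,\nu_2)}$ really does kill the linear moment and that its second moment produces exactly $\mu_n(x)$ with the correct constant $\tfrac{1}{8}$. The factor $\tfrac{1}{8}$ rather than $\tfrac{1}{2}$ comes from the optimal constant in the $K$-functional--$\omega_2$ equivalence combined with the factor $\tfrac{1}{2}$ from the Taylor remainder, so I would need to be careful to absorb these constants consistently into the final $C$ and into the definition of $\mu_n(x)$. Everything else reduces to the linearity and positivity of $\mathcal{L}_n^{(\nu_1,\nu_2)}$ together with Lemma~2.2, which are already established.
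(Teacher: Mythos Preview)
Your approach is essentially identical to the paper's. Note that your auxiliary operator $\widehat{\mathcal{L}}_n^{(\nu_1,\nu_2)}$ is literally the paper's $\mathcal{F}_n^{(\nu_1,\nu_2)}$, since $x+\Delta_{1,n}^{(\nu_1,\nu_2)}(x)=\mathcal{L}_n^{(\nu_1,\nu_2)}(s;x)$; the subsequent steps (vanishing first moment, Taylor remainder, $f=(f-\psi)+\psi$ splitting, infimum to $\mathcal{K}$, then the $\mathcal{K}\asymp\omega_2$ equivalence, and finally recovering the $\omega(f;\Delta_{1,n})$ term) match line by line.

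One small correction: you write that $\widehat{\mathcal{L}}_n^{(\nu_1,\nu_2)}\big((s-x)^2;x\big)$ equals $\Delta_{2,n}^{(\nu_1,\nu_2)}(x)+[\Delta_{1,n}^{(\nu_1,\nu_2)}(x)]^2$, but a direct computation gives $\Delta_{2,n}^{(\nu_1,\nu_2)}(x)-[\Delta_{1,n}^{(\nu_1,\nu_2)}(x)]^2$. Since $\widehat{\mathcal{L}}_n^{(\nu_1,\nu_2)}$ is not positive, you cannot simply bound the integral Taylor remainder by its second moment anyway; the paper instead applies the triangle inequality to split $\mathcal{F}_n^{(\nu_1,\nu_2)}\big(\int_x^s(s-u)\psi''(u)\,du;x\big)$ into the $\mathcal{L}_n^{(\nu_1,\nu_2)}$ piece (bounded by $\tfrac12\Delta_{2,n}\|\psi''\|$) and the point-evaluation piece (bounded by $\tfrac12[\Delta_{1,n}]^2\|\psi''\|$), which is where the \emph{sum} $\Delta_{2,n}+[\Delta_{1,n}]^2$ and hence the factor $\tfrac18$ in $\mu_n$ actually arise. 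With that adjustment your plan goes through exactly as in the paper.
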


\begin{proof}
Firstly let us consider the operators $\mathcal{F}_{n}^{\left( \nu _{1},\nu
_{2}\right) }$ given by%
\begin{equation*}
\mathcal{F}_{n}^{\left( \nu _{1},\nu _{2}\right) }\left( f;x\right) =%
\mathcal{L}_{n}^{\left( \nu _{1},\nu _{2}\right) }\left( f;x\right) -f\left(
\mathcal{L}_{n}^{\left( \nu _{1},\nu _{2}\right) }\left( s;x\right) \right)
+f\left( x\right) \text{.}
\end{equation*}%
Then one obtains from Lemma 2.2%
\begin{equation}
\mathcal{F}_{n}^{\left( \nu _{1},\nu _{2}\right) }\left( s-x;x\right) =0%
\text{.}  \label{12}
\end{equation}%
Moreover, for $\psi \in C_{B}^{2}\left[ 0,\infty \right) $ the following
equality can be obtained by the Taylor formula%
\begin{equation*}
\psi \left( s\right) =\psi \left( x\right) +\left( s-x\right) \psi
^{^{\prime }}\left( x\right) +\int\limits_{x}^{s}\left( s-u\right) \psi
^{^{\prime \prime }}\left( u\right) du\text{.}
\end{equation*}%
This equation, together with $\left( \ref{12}\right) $, leads immediately to%
\begin{eqnarray}
\left\vert \mathcal{F}_{n}^{\left( \nu _{1},\nu _{2}\right) }\left( \psi
;x\right) -\psi \left( x\right) \right\vert &=&\left\vert \mathcal{F}%
_{n}^{\left( \nu _{1},\nu _{2}\right) }\left( \int\limits_{x}^{s}\left(
s-u\right) \psi ^{^{\prime \prime }}\left( u\right) du;x\right) \right\vert
\notag \\
&\leq &\left\vert \mathcal{L}_{n}^{\left( \nu _{1},\nu _{2}\right) }\left(
\int\limits_{x}^{s}\left( s-u\right) \psi ^{^{\prime \prime }}\left(
u\right) du;x\right) \right\vert  \notag \\
&&+\left\vert \int\limits_{x}^{\mathcal{L}_{n}^{\left( \nu _{1},\nu
_{2}\right) }\left( s;x\right) }\left( \mathcal{L}_{n}^{\left( \nu _{1},\nu
_{2}\right) }\left( s;x\right) -u\right) \psi ^{^{\prime \prime }}\left(
u\right) du\right\vert  \notag \\
&\leq &\frac{1}{2}\left\{ \Delta _{2,n}^{\left( \nu _{1},\nu _{2}\right)
}\left( x\right) +\left[ \Delta _{1,n}^{\left( \nu _{1},\nu _{2}\right)
}\left( x\right) \right] ^{2}\right\} \left\Vert \psi ^{^{\prime \prime
}}\right\Vert _{C_{B}\left[ 0,\infty \right) }  \notag \\
&\leq &4\mu _{n}\left( x\right) \left\Vert \psi \right\Vert _{C_{B}^{2}\left[
0,\infty \right) }\text{.}  \label{13}
\end{eqnarray}%
Combining the definition of $\mathcal{F}_{n}^{\left( \nu _{1},\nu
_{2}\right) }$ operator, Lemma 2.2 and $\left( \ref{13}\right) $, we obtain
the estimate%
\begin{eqnarray*}
\left\vert \mathcal{L}_{n}^{\left( \nu _{1},\nu _{2}\right) }\left(
f;x\right) -f\left( x\right) \right\vert &\leq &\left\vert \mathcal{F}%
_{n}^{\left( \nu _{1},\nu _{2}\right) }\left( f-\psi ;x\right) -\left(
f-\psi \right) \left( x\right) \right\vert \\
&&+\left\vert \mathcal{F}_{n}^{\left( \nu _{1},\nu _{2}\right) }\left( \psi
;x\right) -\psi \left( x\right) \right\vert +\left\vert f\left( \mathcal{L}%
_{n}^{\left( \nu _{1},\nu _{2}\right) }\left( s;x\right) \right) -f\left(
x\right) \right\vert \\
&\leq &4\left\Vert f-\psi \right\Vert _{C_{B}\left[ 0,\infty \right) }+4\mu
_{n}\left( x\right) \left\Vert \psi \right\Vert _{C_{B}^{2}\left[ 0,\infty
\right) } \\
&&+\omega \left( f;\Delta _{1,n}^{\left( \nu _{1},\nu _{2}\right) }\left(
x\right) \right) \text{.}
\end{eqnarray*}%
If we take into account the relation between $\mathcal{K}\left( f;.\right) $
and $\omega _{2}\left( f;.\right) $, we have
\begin{eqnarray*}
\left\vert \mathcal{L}_{n}^{\left( \nu _{1},\nu _{2}\right) }\left(
f;x\right) -f\left( x\right) \right\vert &\leq &4\mathcal{K}\left( f;\mu
_{n}\left( x\right) \right) +\omega \left( f;\Delta _{1,n}^{\left( \nu
_{1},\nu _{2}\right) }\left( x\right) \right) \\
&\leq &C\omega _{2}\left( f;\sqrt{\mu _{n}\left( x\right) }\right) +\omega
\left( f;\Delta _{1,n}^{\left( \nu _{1},\nu _{2}\right) }\left( x\right)
\right) \text{.}
\end{eqnarray*}%
That is the assertion.
\end{proof}

\section{Examples}

In this section, we will clarify our analysis. To illustrate our situation,
we now consider two examples.

\begin{example}
The function%
\begin{equation*}
e^{bt^{d+1}}\exp \left( xt\right)
\end{equation*}%
is the generating function of the Gould-Hopper polynomials \cite{19}, i.e.,
the expansion
\begin{equation}
e^{bt^{d+1}}\exp \left( xt\right) =\sum\limits_{k=0}^{\infty
}g_{k}^{d+1}\left( x,b\right) \frac{t^{k}}{k!}  \label{14}
\end{equation}%
holds. The general expression for these polynomials is given as%
\begin{equation*}
g_{k}^{d+1}\left( x,b\right) =\sum\limits_{s=0}^{\left[ \frac{k}{d+1}\right]
}\frac{k!}{s!\left( k-\left( d+1\right) s\right) !}b^{s}x^{k-\left(
d+1\right) s}\text{.}
\end{equation*}%
The polynomials $g_{k}^{d+1}$ defined by $\left( \ref{14}\right) $ are the
generalized Brenke polynomials with
\begin{equation*}
A_{1}\left( t\right) =e^{bt^{d+1}}\text{, }A_{2}\left( t\right) =e^{t}\ \
\text{and \ }h\left( t\right) =t\text{.}
\end{equation*}%
Under the assumption $b\geq 0$, these polynomials satisfy the conditions of
Theorem 2.1 and assumptions given in introduction part of this work. Hence,
the explicit form of $\mathcal{L}_{n}^{\left( \nu _{1},\nu _{2}\right) }$
operators is given by%
\begin{equation*}
\mathcal{L}_{n}^{\left( \nu _{1},\nu _{2}\right) }\left( f;x\right)
=e^{-nx-b}\sum\limits_{k=0}^{\infty }\frac{g_{k}^{d+1}\left( nx,b\right) }{k!%
}f\left( \frac{k+\nu _{1}}{n+\nu _{2}}\right) \text{,}
\end{equation*}%
where $x\in \left[ 0,\infty \right) $.
\end{example}

\begin{example}
The function%
\begin{equation*}
\frac{1}{\left( 1-t\right) ^{m+1}}\exp \left( xt\right)
\end{equation*}%
is the generating function of the Miller-Lee polynomials \cite{20}, i.e.,
the expansion\
\begin{equation}
\frac{1}{\left( 1-t\right) ^{m+1}}\exp \left( xt\right)
=\sum\limits_{k=0}^{\infty }G_{k}^{\left( m\right) }\left( x\right) t^{k}
\label{15}
\end{equation}%
holds for\ $\left\vert t\right\vert <1$. The general expression for these
polynomials is given as%
\begin{equation*}
G_{k}^{\left( m\right) }\left( x\right) =\sum\limits_{r=0}^{k}\frac{\left(
m+1\right) _{r}}{r!\left( k-r\right) !}x^{k-r}\text{,}
\end{equation*}%
where $\left( .\right) _{r}$ is the Pochhammer's symbol. The polynomials $%
G_{k}^{\left( m\right) }$ defined by $\left( \ref{15}\right) $ are the
generalized Brenke polynomials with%
\begin{equation*}
A_{1}\left( t\right) =\frac{1}{\left( 1-t\right) ^{m+1}}\text{, }A_{2}\left(
t\right) =e^{t}\text{ \ and\ \ }h\left( t\right) =t\text{.}
\end{equation*}%
For $t\rightarrow \frac{t}{2}$ and $x\rightarrow 2x$, equation $\left( \ref%
{15}\right) $ takes the form%
\begin{equation*}
\frac{1}{\left( 1-\frac{t}{2}\right) ^{m+1}}\exp \left( xt\right)
=\sum\limits_{k=0}^{\infty }\frac{G_{k}^{\left( m\right) }\left( 2x\right) }{%
2^{k}}t^{k}\text{, \ \ \ \ }\left\vert t\right\vert <2\text{.}
\end{equation*}%
Hence, the explicit form of $\mathcal{L}_{n}^{\left( \nu _{1},\nu
_{2}\right) }$ operators is given by%
\begin{equation*}
\mathcal{L}_{n}^{\left( \nu _{1},\nu _{2}\right) }\left( f;x\right)
=e^{-nx}\sum\limits_{k=0}^{\infty }\frac{G_{k}^{\left( m\right) }\left(
2nx\right) }{2^{m+k+1}}f\left( \frac{k+\nu _{1}}{n+\nu _{2}}\right) \text{,}
\end{equation*}%
where $m>-1$ and $x\in \left[ 0,\infty \right) $.
\end{example}


\begin{thebibliography}{9}


\bibitem{1} O. Szasz, Generalization of S. Bernstein's polynomials to the
infinite interval, J. Research Nat. Bur. Standards 45\textbf{\ (}1950)
239-245.

\bibitem{2} T.S. Chihara, An Introduction to Orthogonal Polynomials, Gordon
and Breach, New York, 1978.

\bibitem{3} A. Jakimovski, D. Leviatan, Generalized Szasz operators for the
approximation in the infinite interval, Mathematica (Cluj)\textit{\ }11%
\textit{\textbf{\ }}(1969)\textbf{\ }97-103.

\bibitem{4} M.E.H. Ismail, On a generalization of Sz\'{a}sz operators,
Mathematica (Cluj) 39 (1974) 259-267.

\bibitem{5} S. Varma, S. Sucu, G. \.{I}\c{c}\"{o}z, Generalization of Szasz
operators involing Brenke type polynomials, Comput. Math. Appl. 64(2) (2012)
121-127.

\bibitem{6} S. Sucu, G. \.{I}\c{c}\"{o}z, S. Varma, On some extensions of
Szasz operators including Boas-Buck-type polynomials, Abstr. Appl. Anal.
(2012) Art. ID 680340 15 pp.

\bibitem{7} S. Varma, On a generalization of Sz\'{a}sz operators by multiple
Appell polynomials, Stud. Univ. Babe\c{s}-Bolyai Math. 58(3) (2013) 361-369.

\bibitem{8} A. Kajla, P.N. Agrawal, Sz\'{a}sz-Kantorovich type operators
based on Charlier polynomials, Kyungpook Math. J. 56(3) (2016) 877-897.

\bibitem{9} \c{C}. Atakut, \.{I}. B\"{u}y\"{u}kyaz\i c\i , Approximation by
Kantorovich-Sz\'{a}sz type operators based on Brenke type polynomials,
Numer. Funct. Anal. Optim. 37(12) (2016) 1488-1502.

\bibitem{10} M. Sidharth, A.M. Acu, P.N. Agrawal,
Chlodowsky-Szasz-Appell-type operators for functions of two variables, Ann.
Funct. Anal. 8(4) (2017) 446-459.

\bibitem{11} P.N. Agrawal, B. Baxhaku, R. Chauhan, Quantitative
Voronovskaya-and Gr\"{u}ss-Voronovskaya-type theorems by the blending
variant of Sz\'{a}sz operators including Brenke-type polynomials, Turkish J.
Math. 42(4) (2018) 1610-1629.

\bibitem{12} B. \c{C}ekim, R. Akta\c{s}, G. \.{I}\c{c}\"{o}z,
Kantorovich-Stancu type operators including Boas-Buck type polynomials,
Hacet. J. Math. Stat. 48(2) (2019) 460-471.

\bibitem{13} G. \.{I}\c{c}\"{o}z, H. Eryigit, Beta generalization of
Stancu-Durrmeyer operators involving a generalization of Boas-Buck type
polynomials, Gazi University Journal of Science 33(3) (2020) 715-724.

\bibitem{14} T. Neer, A.M. Acu, P.N. Agrawal, Baskakov-Durrmeyer type
operators involving generalized Appell polynomials, Math. Methods Appl. Sci.
43(6) (2020) 2911-2923.

\bibitem{15} R.C.T. Smith, Generating functions of Appell form for the
classical orthogonal polynomials, Proc. Amer. Math. Soc. 7 (1956) 636-641.

\bibitem{16} G. \.{I}\c{c}\"{o}z, S. Varma, S. Sucu, Approximation by
operators including generalized Appell polynomials, Filomat 30(2) (2016)
429-440.

\bibitem{17} F. Altomare, M. Campiti, Korovkin-type approximation theory and
its applications. Appendix A by Michael Pannenberg and Appendix B by
Ferdinand Beckhoff, de Gruyter Studies in Mathematics, 17. Walter de Gruyter
\& Co., Berlin, 1994.

\bibitem{18} R.A. Devore, G.G. Lorentz, Constructive approximation,
Springer-Verlag, Berlin, 1993.

\bibitem{19} H.W. Gould, A.T. Hopper, Operational formulas connected with
two generalizations of Hermite polynomials, Duke Math. J. 29 (1962) 51-63.

\bibitem{20} G. Dattoli, S. Lorenzutta, D. Sacchetti, Integral
representations of new families of polynomials, Ital. J. Pure Appl. Math. 15
(2004) 19-28.


\end{thebibliography}
\end{document}